\documentclass[11pt]{amsart}
\usepackage{ae,amsfonts,euscript,enumerate}
\usepackage{amsmath,euscript}
\usepackage{amssymb}
\usepackage{amsthm}
\usepackage{enumerate}

\usepackage{amsfonts}
\usepackage{amssymb,amsmath,array}
\usepackage{slashbox}
%% colin colour comment stuff
%% remove when done
\usepackage[usenames,dvipsnames]{color}
%\usepackage{ulem}

%% colour comment stuff done

\newtheorem{thm}{Theorem}[section]
\newtheorem{lem}[thm]{Lemma}

\newtheorem{prop}[thm]{Proposition}

\theoremstyle{definition}

\newtheorem{quest}{Question}

%\renewcommand{\S} {\mathcal{S}}

%%%%%%%%%%%%%%%%%%%%%%%%%%%%%%%%%%%%%%%%%%%

%\newcommand{\st}{{\rm st}}
\newcommand{\st}{\,|\,}

\def\co{{\mathcal O}}

\def\oqmm13{\co_q(M_{1,3})}
\def\oqm23{\co_q(M_{2,3})}

%%%%%%%%%%%%%%%%%%%%%%%%%%%%%%%%%%%%%%%
%%%%%%%%%%%%%%%%%%%%%%%%%%%%%%%%%%%%%%%

%%%%%%%%%%%%%%%%%%%%%%%%%%%%%%% FIN-RACOURCIS %%%%%%%%%%%%%%%%%%%%%%%%%%%%%%%%%

%%%%%%%%%%%%%%%%%%%%%%%%%%%%%%  XY-PIC %%%%%%%%%%%%%%%%%%%%%%%%%%%%%%%%%%%%%%%%

\tolerance=10000

\usepackage{amssymb}
\usepackage{amsmath}
\usepackage[latin1]{inputenc}

%\usepackage{fullpage}
   % \addtolength{\oddsidemargin}{-.125in}
   % \addtolength{\evensidemargin}{-.125in}
   % \addtolength{\textwidth}{.125in}

   % \addtolength{\topmargin}{-.125in}
   % \addtolength{\textheight}{.125in}

\usepackage{amsmath,euscript}
\usepackage{amssymb}
\usepackage{amsthm}
\usepackage{enumerate}
\usepackage{amsfonts}
\usepackage{colortbl}
\usepackage{a4wide}
\usepackage{amssymb,amsmath,array}

%%%%%%%%%%%%%%%%%%%%%%%%%%%%%%%%%%%%%%%%%%%
%%%%%%%%%%%%%%%%%%%%%%%%%%%%%%%%%%%%%%%
%%%%%%%%%%%%%%%%%%%%%%%%%%%%%%%%%%%%%%%

%%%%%%%%%%%%%%%%%%%%%%%%%%%%%%% FIN-RACOURCIS %%%%%%%%%%%%%%%%%%%%%%%%%%%%%%%%%

%%%%%%%%%%%%%%%%%%%%%%%%%%%%%% FIN XY-PIC %%%%%%%%%%%%%%%%%%%%%%%%%%%%%%%%%%%%%

\title[]{Embeddings of quotient division algebras of rings of differential operators}

\author{Jason P.~Bell}
\thanks{The authors thank NSERC for its generous support.}
\keywords{Rings of differential operators, genus, division rings, noncommutative surfaces, Ore extensions, embeddings, birational invariants}

\subjclass[2010]{14A22, 16S38, 16W50, 16P90}

\address{Jason Bell and Ritvik Ramkumar\\
Department of Pure Mathematics\\
University of Waterloo\\
Waterloo, ON, N2L 3G1\\
Canada}

\email{jpbell@uwaterloo.ca}
\email{rramkuma@uwaterloo.ca}
\author{Colin Ingalls}

\address{Colin Ingalls\\
Department of Mathematics\\
University of New Brunswick\\
Fredricton, NB, \\
Canada}
\email{cingalls@math.unb.ca}

\author{Ritvik Ramkumar}

\begin{document}
\bibliographystyle{plain}
%%%%%%%%%%%%%%%%%%%%%%%%%%%%%%%%%%%%%%%%%%%%%%%%%%%%%%%%%%%%%%%%%%%%%%%%%%%%%%%%%%%%%%%%%%%%%%%%%%%%%%%%%%%%%%%%%%%%%%

\begin{abstract} Let $k$ be an algebraically closed field of characteristic zero, 
let $X$ and $Y$ be smooth irreducible algebraic curves over $k$, and let $D(X)$ and $D(Y)$ denote respectively the quotient division rings of the ring of differential operators of $X$ and $Y$.  We show that if there is a $k$-algebra embedding of $D(X)$ into $D(Y)$ then the genus of $X$ must be less than or equal to the genus of $Y$, answering a question of the first-named author and Smoktunowicz.  
\end{abstract}
\maketitle
\section{Introduction}  
One of the central results in algebraic geometry is the birational classification of surfaces.  In particular, this work shows that the most crucial birational invariants of a surface are the Kodaira dimension, the geometric genus, the irregularity, and the plurigenera.    A related question concerns when there exists a dominant rational map from a smooth irreducible projective surface $X$ to another smooth surface $Y$.  Algebraically, this corresponds to the existence of an embedding of the field of rational functions of $Y$ into the corresponding field for $X$. Many of these birational invariants give restrictions upon when such a map can exist.  For example, if $X$ is a smooth irreducible surface over a base field of characteristic zero and there is dominant rational map to another smooth surface $Y$ then all of the above listed birational invariants of $X$ must be at least those of $Y$ (see Lemma~\ref{bndschar0} for details).  In the case of curves, the Riemann-Hurwitz theorem \cite[IV, Corollary 2.4]{Hart} gives that if $X$ and $Y$ are smooth irreducible projective curves and there is surjective morphism from $X$ to $Y$ then the genus of $X$ must be at least that of $Y$.  

One has noncommutative analogues of these questions, which we now describe.  If $A$ is a graded noetherian domain of GK dimension $3$ that is generated in degree one, and we let $\mathcal{C}$ denote the category of finitely generated graded right $A$-modules
modulo the subcategory of torsion modules, then we can create a noncommutative analogue of ${\rm Proj}(A)$ by taking the triple $(\mathcal{C},\mathcal{O}, s)$,
where $\mathcal{O}$ is the image of the right module $A$ in $\mathcal{C}$ and $s$ is the autoequivalence of $\mathcal{C}$ defined by the shift operator on graded modules, as described in \cite{Art}.  Furthermore, $A$ has a graded quotient division ring, which we denote $Q_{\rm gr}(A)$, which is formed
by inverting the nonzero homogeneous elements of $A$. Then the graded form of Goldie's theorem \cite{GS} gives that there is a division ring $D$ and
automorphism $\sigma$ of $D$ such that
$$Q_{\rm gr}(A) = D[t, t^{-1}; \sigma].$$
We can then view $D$ as being the ``function field'' of $X = {\rm Proj}(A)$ and in the commutative setting this construction coincides exactly with the ordinary field of rational functions on an irreducible projective variety.  We can then view ${\rm Proj}(A)$ and ${\rm Proj}(B)$ as being birationally isomorphic if their corresponding division rings are isomorphic.  

Unlike in the commutative case, however, much less is known in the noncommutative setting and in general there is a dearth of noncommutative birational invariants outside of various analogues for transcendence degree \cite{GK, YZ, Z, Z2}.  Despite the lack of noncommutative invariants, there are large classes of well-studied examples of noncommutative surfaces and Artin \cite{Art} has given a proposed birational classification of graded $k$-algebra domains $A$ of GK dimension 3 (possessing additional homological properties which we do not mention here).   In this case, we have $Q_{\rm gr}(A)=D[x,x^{-1};\sigma]$ for some division ring $D$ and, in analogy with the birational classification of surfaces mentioned earlier, one would like to understand the type of division rings that can occur.  In particular, Artin claims that under the hypotheses on $A$ from his paper, the division ring $D$ must satisfy at least one of the following properties:
\begin{enumerate}
\item $D$ is finite-dimensional over its centre, which is a finitely generated extension of $k$
of transcendence degree $2$;
\item $D$ is isomorphic to a Skylanin division ring;
\item $D$ is isomorphic to the quotient division ring of $K[t; \sigma]$ where $K$ is a finitely generated field extension of $k$ of genus $0$ or $1$ and $\sigma$ is a $k$-algebra automorphism of $K$;
\item $D$ is isomorphic to the quotient division ring of $K[t; \delta]$ where $K$ is a finitely generated field extension of $k$ and $\delta$ is a $k$-linear derivation of $K$.
\end{enumerate}
We point out that the first class encompasses all of the fields of rational functions on algebraic surfaces and has lumped them together as part of a single class.  In this sense, the classification is much less specific than in the commutative setting, but from the noncommutative point of view one can see the first case as being essentially ``understood''.  (Work of Chan and the second-named author \cite{CI} provides a birational classification in this case.  In particular, it shows that such division rings are either ``ruled'', ``del Pezzo'', or have a minimal model which is unique up to Morita equivalence.)  We also note that division rings of types (2), (3), and (4) can be finite over their centres, and so when we talk about these latter types we specifically mean only the division rings of this type that are infinite-dimensional over their centres.  Rings of the form $K[t; \delta]$ with $K$ a finitely generated field extension of $k$ of transcendence degree one and $\delta$ a derivation of $K$ are birationally isomorphic to rings of
differential operators of a smooth curve.  The most important ring of this type is the first Weyl algebra, which has generators $x$ and $y$ and the relation $xy-yx=1$.  

Just as one would like to understand conditions which must be met to have a dominant rational map of surfaces in terms of invariants, one would also like to understand when one can have an embedding of division rings between two division rings on Artin's list.  In fact, in some cases it is not hard to see that embeddings cannot occur except for trivial reasons (see \S 3 for more details).  The most difficult cases appear to be when one is dealing with division rings obtained from rings of differential operators on curves (with the exception of the Weyl algebra) and with the Sklyanin algebras.  In the former case, the first-named author along with Smoktunowicz \cite[Conjecture 4.1]{BSm} conjectured that a type of noncommutative version of the Riemann-Hurwitz theorem might hold, in the sense of showing that an embedding can occur only if an inequality holds between the genera of the curves.  Our main result is to prove this conjecture.  We note that we will speak of the genus of a field of rational functions $F$ of a smooth curve, and by this we simply mean the geometric genus of a smooth irreducible projective curve  that has $F$ as its field of rational functions. 
\begin{thm} Let $k$ be an algebraically closed field of characteristic zero, let $F$ and $K$ be finitely generated field extensions of $k$ of transcendence degree one, and let $\mu$ and $\delta$ be nonzero derivations of $F$ and $K$ respectively.  If $F(t;\mu)$ embeds in $K(x;\delta)$ then the genus of $F$ is less than or equal to the genus of $K$.
\label{thm: main}
\end{thm}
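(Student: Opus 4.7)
The strategy I would pursue is a reduction modulo a large prime $p$. In characteristic zero the rings $F[t;\mu]$ and $K[x;\delta]$ are simple with trivial center, but in positive characteristic they acquire large centers of transcendence degree $2$ and become central simple algebras of PI degree $p$ over function fields of surfaces, bringing classical surface birational geometry into play.

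I would first spread out, choosing a finitely generated $\ZZ$-subalgebra $R \subseteq k$ over which the curves, the derivations, and the embedding $\phi$ all descend. For a Zariski dense set of maximal ideals $\mathfrak{p}$ of $R$ with residue characteristic $p$ large enough, specialization yields a $\kappa(\mathfrak{p})$-algebra embedding $\phi_p \colon F_p(t;\mu_p) \hookrightarrow K_p(x;\delta_p)$, with $\mathrm{genus}(F_p) = \mathrm{genus}(F)$ and $\mathrm{genus}(K_p) = \mathrm{genus}(K)$. The center $Z_K := Z(K_p(x;\delta_p))$ is generated over $\kappa(\mathfrak{p})$ by $K_p^p$ together with an additional central element arising from the $p$-curvature of $\delta_p$, so it is the function field of a surface $S_K$ that is birationally a $\mathbb{P}^1$-bundle over the Frobenius twist of $Y_p$; in particular the irregularity of $S_K$ equals $g_K$. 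The same construction produces $S_F$ with irregularity $g_F$.

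Since any commutative subfield of a PI division ring has transcendence degree at most that of the center, the image $\phi_p(Z_F) \subseteq K_p(x;\delta_p)$ is algebraic over $Z_K$; their compositum $E = \phi_p(Z_F) \cdot Z_K$ is then a finite extension of both $Z_F$ and $Z_K$, corresponding to a surface $S_E$ that dominates both $S_F$ and $S_K$ via finite morphisms.

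The hardest step is deducing $g_F \le g_K$ from this configuration: a bare comparison of irregularities only gives $q(S_E) \ge \max(g_F, g_K)$, which is inadequate. To sharpen this, I would exploit the inner derivation $[\phi_p(t), \,\cdot\,]$ of $K_p(x;\delta_p)$, which satisfies $[\phi_p(t), \phi_p(f)] = \phi_p(\mu(f))$ for $f \in F_p$ and hence preserves the subfield $\phi_p(F_p)$; this should tie the two rulings on $S_E$ (one pulled back from $S_F$, the other from $S_K$) together tightly enough to produce a rational map $Y_p \to X_p$ of the underlying curves, after which classical Riemann-Hurwitz gives $g_F \le g_K$. The genus-zero case requires separate (and easier) treatment, since the ruling on a rational surface is not unique.
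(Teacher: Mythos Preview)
Your overall strategy of reducing modulo a large prime $p$ is exactly the paper's approach, and you correctly identify that the centres $Z_F$ and $Z_K$ become function fields of ruled surfaces over (Frobenius twists of) the reductions of $X$ and $Y$. However, you miss the key simplification that makes the argument go through cleanly: both $F_p(t;\mu_p)$ and $K_p(x;\delta_p)$ have the \emph{same} PI degree, namely $p$. Indeed, for $p$ large enough that the reductions are degree $<p$ over a rational subfield, one checks that the $p$-curvature vanishes and the centres are precisely $F_p^{\langle p\rangle}(t^p)\cong F_p(t)$ and $K_p^{\langle p\rangle}(x^p)\cong K_p(x)$, so both division rings are $p^2$-dimensional over their centres. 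A short argument using this equality of PI degrees then forces $\phi_p(Z_F)\subseteq Z_K$ on the nose. There is therefore no need to form a compositum or invoke an inner-derivation argument: one obtains an embedding $F_p(t)\hookrightarrow K_p(t)$ of commutative function fields directly, and the problem reduces to a statement about dominant rational maps between ruled surfaces, which the paper handles with a short geometric argument (take a section of the ruling on the source, guaranteed by Tsen's theorem, and track its image).

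Your proposed route through the compositum $E$ and the inner derivation $[\phi_p(t),\cdot]$ is not obviously wrong, but the sentence ``should tie the two rulings on $S_E$ together tightly enough to produce a rational map $Y_p\to X_p$'' is not a proof, and you flag it yourself as the hardest step; the paper's equal-PI-degree observation bypasses it entirely. Separately, you gloss over why the specialization $\phi_p$ remains \emph{injective}. One cannot reduce the quotient division ring $K(x;\delta)$ modulo $p$ directly, and the paper is careful here: it first shows that a finitely generated subring $C[t;\mu]$ (with $C$ closed under $\mu$ and containing a unit $s$ with $\mu(s)$ also a unit) maps into the localization of $A[x;\delta]$ at the Ore set of monic polynomials, which \emph{does} survive reduction; injectivity of the reduced map is then established via a Gelfand--Kirillov dimension argument together with the relation $[t,s]=\mu(s)\in C^{\times}$, which forbids the image from being commutative.
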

We note that if either $\mu$ or $\delta$ is the zero derivation then the theorem holds trivially (see the remarks in \S 3 for more details concerning the case when exactly one is zero; when both are zero this follows from the irregularity inequality mentioned earlier).

The proof of Theorem \ref{thm: main} is complicated by the fact that noncommutative localization is notoriously poorly behaved.  For example, rings of the form $K[t;\delta]$ can be viewed as noncommutative deformations of the polynomial ring $K[t]$ and have an associated graded ring of this form.  This fact often simplifies the study of their ring theoretic properties.  On the other hand, for $K$ of characteristic zero, when one forms the quotient division ring by inverting nonzero elements, the resulting division ring is unwieldy and contains a copy of the free algebra on two generators unless $\delta$ is zero (this can be seen from the fact that the division ring $K(t;\delta)$ then contains a copy of the Weyl algebra and one can then use a result of Makar-Limanov \cite{ML}).  Thus the problem of understanding embeddings of $F[t;\mu]$ into $K[x;\mu]$ is considerably simpler than the problem of understanding embeddings of their quotient division rings.  Our approach in proving Theorem \ref{thm: main} is to use reduction modulo primes.  Here one again must be careful about what we even mean.  It is known that rings of the form $F[t;\mu]$ satisfy a polynomial identity when $F$ is a a field of positive characteristic and thus one can hope that commutative methods might apply.  The one subtlety, however, is that while one can expect to obtain information about $F[t;\mu]$ by ``reducing modulo primes'' one cannot in general expect to gain understanding about its quotient division ring in this manner.  The reason for this is that if one has a finitely generated noncommutative $\mathbb{Z}$-algebra then reducing mod a prime $p$ and then localizing is generally straightforward, but trying to reduce mod $p$ after localizing will in many cases yield the zero ring.

The strategy to get around these difficulties is to suppose that we have an embedding $F(t;\mu)$ into $K(x;\delta)$ for fields of rational functions $F$ and $K$ of curves $X$ and $Y$.  Then this embedding gives an embedding of $C[t;\mu]$ into $K(x;\delta)$ where $C$ is a finitely generated $\mathbb{Z}$-algebra, closed under the derivation $\mu$, such that $C$ contains a set of generators for $F$ as a field extension of $k$.  Although $C[t;\mu]$ is mapping into $K(x;\delta)$, we in fact show that it maps into well-behaved localization of a ring of the form $A[x;\delta]$ where $A$ is again a finitely generated $\mathbb{Z}$-algebra.  In particular, we show that we are still able to reduce modulo primes in ${\rm Spec}(A)$ in our localization.  We now reduce modulo various prime ideals and then localize to obtain embeddings of division rings that are finite over their centres and we also show that these embeddings yield embeddings of the respective centres.  We then use properties of flat morphisms of schemes to show that the centres of these division rings are isomorphic to function fields of ruled surfaces and we finally use results from algebraic geometry to show that the embedding of the centres can occur only if we have the desired inequality for the genera of $X$ and $Y$.  

The outline of this paper is as follows.  In \S 2, we prove Theorem \ref{thm: main} and then in \S 3, we give general remarks about the embedding question for other division rings on Artin's list.

\section{Proof of Theorem \ref{thm: main}}

We begin with a lemma, which we will use to construct a suitable model for our ring of differential operators that will allow us to reduce mod $p$.  
\begin{lem} Let $k$ be an algebraically closed field of characteristic zero, let $F$ be a finitely generated extension of $k$ of transcendence degree one and genus $g$ equipped with a nonzero $k$-linear derivation $\delta$, and let $B\subseteq F$ be a finitely generated $\mathbb{Z}$-algebra.  Then there exist finitely generated $\mathbb{Z}$-subalgebras $C_0$ and $C$ of $k$ and $F$ respectively such that the following properties hold:
\begin{enumerate}
\item $\delta(C)\subseteq C$;
\item there exists $s\in C$ such that $\delta(s)$ and $s$ are units in $C$;
\item $C\supseteq B$;
%\item $C_0=C\cap \colinchange{\sout{\mathbb{C}} k}$;
\item each maximal ideal $P$ of $C_0$ has the property that $PC$ is a prime ideal of $C$ and
$C/PC \otimes_{C_0/P} \overline{C_0/P}$ is an integral domain whose field of fractions is a field extension $F_P$ of $\overline{C_0/P}$ of transcendence degree one and genus $g$;
\item there is some fixed $d\ge 1$ such that for a Zariski dense set of maximal ideals $P$ of $C_0$, the field $F_P$ is an extension of the $\overline{C_0/P}$-subfield of $F_P$ generated by the image of $s$ of fixed degree 
$$d \ = \ [F_P: \overline{C_0/P}(s)].$$
\end{enumerate}\label{lem:1.1}
\end{lem}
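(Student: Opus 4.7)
The plan is to realize $F$ as the function field of a smooth projective curve $X$ over $k$ with $\delta$ corresponding to a rational vector field on $X$, pass to a well-chosen affine open, and then spread out over a finitely generated $\mathbb{Z}$-subalgebra $C_0$ of $k$ so that every closed fibre of the resulting family is a smooth projective curve of genus $g$.

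First, pick a smooth projective curve $X/k$ with $k(X)=F$. The derivation $\delta$ corresponds to a rational section of the tangent sheaf $T_X$, regular away from a finite set. Choose an affine open $U\subseteq X$ that avoids that finite set, contains a chosen affine open on which $B$ lives, and admits some $s\in\mathcal{O}(U)$ such that $s$ and $\delta(s)$ are both units in $\mathcal{O}(U)$; this is possible since $\delta\neq 0$ and any nonzero function on $X$ has only finitely many zeros and poles. Setting $R:=\mathcal{O}(U)$ we have $\delta(R)\subseteq R$. Now choose generators $g_1,\dots,g_n$ of $R$ over $k$; each $\delta(g_i)$ is a polynomial in the $g_j$ with coefficients in $k$. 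Let $C_0\subseteq k$ be a finitely generated $\mathbb{Z}$-subalgebra containing all of these coefficients as well as those needed to express the generators of $B$ as polynomials in the $g_j$, and set $C:=C_0[g_1,\dots,g_n,s^{-1},\delta(s)^{-1}]\subseteq R$. Since $\delta$ vanishes on $C_0\subseteq k$ and sends each generator into $C$, we get $\delta(C)\subseteq C$; together with $s,\delta(s)\in C^{\times}$ and $B\subseteq C$, this gives properties (1)--(3).

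For (4), extend the affine model $\spec C\to\spec C_0$ to a projective model $\pi:\mathcal{X}\to\spec C_0$ whose generic fibre is $X$. After inverting finitely many elements of $C_0$ we may, by generic flatness, generic smoothness, and openness of the smooth and geometrically integral loci, assume that $\pi$ is smooth projective with geometrically integral fibres over all of $\spec C_0$. Flatness together with the constancy of Euler characteristics in a proper flat family then forces every geometric fibre of $\pi$ to have arithmetic genus $g$, so that for each maximal ideal $P$ of $C_0$ the curve $\mathcal{X}_P\otimes\overline{C_0/P}$ is smooth, geometrically integral, and projective of genus $g$ over $\overline{C_0/P}$. Its open affine $\spec(C/PC\otimes\overline{C_0/P})$ is then a domain, whence $PC$ is prime in $C$, and the fraction field $F_P$ is a field extension of $\overline{C_0/P}$ of transcendence degree one and genus $g$.

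Finally, for (5), the unit $s\in C$ defines a rational map $\mathcal{X}\dashrightarrow\mathbb{P}^1_{C_0}$ of generic degree $d:=[F:k(s)]$; the zero and polar divisors of $s$ on $\mathcal{X}$ are horizontal relative divisors, so $s$ has both a zero and a pole on every fibre and the induced map $\mathcal{X}_P\otimes\overline{C_0/P}\to\mathbb{P}^1_{\overline{C_0/P}}$ is a finite flat morphism of degree exactly $d$. Hence $[F_P:\overline{C_0/P}(s)]=d$ for every $P$, which is strictly stronger than (5). The main obstacle is the spreading-out step for (4): although smoothness, properness, flatness, geometric integrality, and constancy of the genus are each classical, care is needed to arrange, after only finitely many inversions of elements of $C_0$, that all of them hold simultaneously over \emph{every} maximal ideal of $\spec C_0$ rather than merely on a dense open subset.
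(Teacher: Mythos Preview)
Your proposal is correct and follows essentially the same spreading-out strategy as the paper: realize $F$ as $k(X)$ for a smooth projective curve, pass to a suitable affine open closed under $\delta$, descend to a finitely generated $\mathbb{Z}$-subalgebra $C_0\subseteq k$, and then shrink $C_0$ using generic flatness, openness of the smooth and geometrically integral loci, and constancy of the arithmetic genus in a flat proper family. The only notable difference is in (5): the paper computes the fibrewise degree of $s$ via the Hilbert polynomial of $\tilde{s}^*\mathcal{O}(1)$, whereas you argue via horizontality of the zero and polar divisors of $s$ to get a finite flat map to $\mathbb{P}^1_{C_0}$; both work after inverting finitely many elements of $C_0$, and your version in fact yields the degree-$d$ conclusion for \emph{all} maximal ideals rather than just a Zariski dense set.
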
 
\begin{proof}
We can view $F$ as the field of rational functions on some smooth irreducible projective curve $X$.  We fix an embedding $f: X\to \mathbb{P}_k^m$, and we may assume without loss of generality that $m$ is minimal.  We have that $f(X)$ is the zero set of a homogeneous ideal $I$ in $k[x_0,\ldots ,x_m]$.  We choose a finite set of generators for $I$ and we let $S$ denote the set of coefficients of the polynomials in our finite generating set.  Then the homogeneous coordinates induce rational functions $$f_{i,j}=\left(x_i/x_j\right)\big|_{X}$$ for $i\neq j$ and by minimality of $m$, the $f_{i,j}$ can be extended to regular and non-constant maps on a dense neighbourhood of $X$.  Since $\delta$ is nonzero there is some $s$ among the $f_{i,j}$ such that $\delta(s)\neq 0$.  Then for $k\neq \ell$ we have $\delta(f_{k,\ell})=P_{k,\ell}/Q_{k,\ell}$ for some polynomials $P_{k,\ell}$ and $Q_{k,\ell}$ in the $f_{i,j}$.  We also note that the inclusion of fields $k(s)\to F$ induces a non-constant morphism $s : X\to \mathbb{P}^1$ and we let $d$ denote the degree of this map.

We then take $A$ to be a finitely generated $\mathbb{Z}$-algebra that contains the elements of $S$, the coefficients of the $P_{k,\ell}$ and $Q_{k,\ell}$, and the inverses of the nonzero coefficients of the $Q_{k,\ell}$.  Then we obtain a model $\mathcal{X}$ for $X$ over ${\rm Spec}(A)$, by viewing our generating set as lying in $A[x_0,\ldots ,x_m]$ and taking the zero set of the ideal in $A[x_0,\ldots ,x_m]$ generated by these elements.  We then have a morphism $\tilde{s} : \mathcal{X}\to \mathbb{P}^1_{A}$, and a morphism $\chi : \mathbb{P}^1_{A}\to  {\rm Spec}(A)$, and we let $\phi=\chi\circ \tilde{s} : \mathcal{X}\to {\rm Spec}(A)$.  By generic flatness \cite[Th\'eor\`eme 6.9.1]{EGA IV_2} there is some nonzero $f\in A$ such that if we replace $A$ by $A_f$ and replace $\mathcal{X}$ accordingly,
the morphisms $\phi$ and $\tilde{s}$ are flat.  Then for each prime ideal $t\in {\rm Spec}(A)$, 
we have a fibre, which we consider as a closed subscheme $\mathcal{X}_t$ of $\mathbb{P}^m_{{\rm Frac}(A/t)}$, and we have that $\mathcal{X}_t$ has constant arithmetic genus \cite[III, Cor. 9.10]{Hart}.  Since the generic fibre $\mathcal{X}_{\eta}$ has the property that if we extend the base field we obtain $X$, and since $X$ is smooth and irreducible, we see that the arithmetic genus of $\mathcal{X}_{\eta}$ is equal to the geometric genus of $X$ and so the arithmetic genus of each $\mathcal{X}_t$ is equal to $g$.  

Now the set of $t$ in ${\rm Spec}(A)$ for which the fibre $\mathcal{X}_t$ is geometrically irreducible is constructible  \cite[p. 36, Th\'eoreme 4.10]{Jou}; that is, it is a finite union of sets of the form $U\cap V$ where $U$ is open and $V$ is closed.   Similarly, since the generic fibre is smooth, the fibres $\mathcal{X}_t$ are smooth on a Zariski open set of ${\rm Spec}(A)$ \cite[Proposition 17.7.11(ii)]{EGA IV_4}. Since the generic fibre, $\mathcal{X}_{\eta}$, is also geometrically irreducible, we then see that there is some nonzero $h\in A$ such that for $t\in {\rm Spec}(A_h)\subseteq {\rm Spec}(A)$, the fibres are geometrically irreducible and smooth.  Since the arithmetic genera of $\mathcal{X}_t$ are all equal to $g$ for $t$ in the open subset ${\rm Spec}(A_h)$, by smoothness we then have that the geometric genus of each $\mathcal{X}_t$ is $g$ for $t\in {\rm Spec}(A_h)$.  We now replace $A$ by $A_h$ and again adjust our model accordingly.  We pick an affine open subset $U$ of $\mathcal{X}$ such that the finite set of generators for $B$, $s$ and $s^{-1}$, $\delta(s)$ and $\delta(s)^{-1}$ and the functions $f_{i,j}$ are regular on $U$.  Now the collection of regular functions on $U$ is a finitely generated $A_h$ algebra and if $u_1,\ldots ,u_k$ are generators then there is some $b$ that is regular on $U$ such that $\delta(u_i) \in \mathcal{O}_{\mathcal{X}}(U)[1/b]$.  Since derivations behave well with respect to localization, it is then easy to check that $\mathcal{O}_{\mathcal{X}}(U)[1/b]$ is closed under application of $\delta$.  In particular, we can refine $U$ if necessary and assume that $\delta$ preserves the ring of functions that are regular on $U$.  Then if we let $C_0=A_h$ and $C=\mathcal{O}_{\mathcal{X}}(U)$ then $C\subseteq F$ is a $C_0$-algebra and by construction for every maximal ideal $P$ of $C_0$ we have that $CP$ is a prime ideal of $C$ since the corresponding fibre $\mathcal{X}_P$ is geometrically irreducible (in the scheme theoretic sense); and $C/CP$ is $C_0/P$-algebra of Krull dimension one.  Moreover, by geometric irreducibility of the fibres, we have $C/CP\otimes_{C_0/P} \overline{C_0/P}$ is an integral domain and its field of fractions is a transcendence degree one extension of $\overline{C_0/P}$ of genus $g$, by the remarks above.  We also note that our construction gives that $C\supseteq B$ and $s,s^{-1}\in C$ and $\delta(C)\subseteq C$.  It only remains to show (5).

Let $\mathcal{O}(1)$ denote a very ample invertible sheaf on $\mathbb{P}^1_A$.  Then since $\tilde{s}$ is a finite map, we obtain an invertible sheaf $\mathcal{L} :=\tilde{s}^*\mathcal{O}(1)$ on $\mathcal{X}$, which is ample over $A$.   This sheaf $\mathcal{L}$ then gives us a second embedding of $\mathcal{X}$ into some $\mathbb{P}^m_A$.  By generic flatness, we again have that on an open subset of ${\rm Spec}(A)$ the degree (for this embedding induced by $\mathcal{L}$) of $\mathcal{X}_t$ is constant \cite[III, Cor. 9.10]{Hart} and this must be equal to the degree of the generic fibre, $\mathcal{X}_{\eta}$, which (again by extending the base and obtaining $X$) is the limit as $n$ tends to infinity of $h^0(\tilde{s}^*(\mathcal{O}(n)))/n ={\rm deg}(\tilde{s})=d$.  This says that the degree of the field extension of ${\rm Frac}(C/P)$ is $d$-dimensional over the extension generated by the image of $s$ and this gives (5) and completes the proof.
\end{proof}
Once we reduce modulo $p$, our ring of differential operators will satisfy a polynomial identity.  The embedding questions will then reduce to embedding questions for division rings that are finite-dimensional over their centres.  Our first result shows that under certain circumstances, an embedding of division rings will induce an embedding of the centres.  Given a ring $R$, we let $Z(R)$ denote its centre.
 \begin{lem} Let $D_1$ and $D_2$ be two division rings both containing a central subfield $k$ and suppose that $[D_1:Z(D_1)]=[D_2:Z(D_2)]$.  If there is an injective $k$-algebra homomorphism $\phi: D_1\to D_2$, then $\phi(Z(D_1))\subseteq Z(D_2)$.  
 \label{lem:Z2}
\end{lem}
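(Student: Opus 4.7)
Let $R := \phi(D_1) \subseteq D_2$, which is a division subring with centre $K := \phi(Z(D_1))$, and write $F := Z(D_2)$. Let $n^2$ denote the common index $[D_1:Z(D_1)] = [D_2:F]$, so that $[R:K] = n^2 = [D_2:F]$. The plan is to use the double centraliser theorem inside the central simple $F$-algebra $D_2$ to force the compositum $KF$ (formed inside $D_2$) to coincide with $F$, which is exactly the statement $K \subseteq Z(D_2)$.

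First note that because $F$ is central in $D_2$, elements of $F$ commute with everything in $R$ and in $K$. In particular $K$ and $F$ commute, and the compositum $KF$ is a commutative subring of the division ring $D_2$; since every element of $D_2$ is algebraic over $F$ (as $[D_2:F]<\infty$), $KF$ is in fact a field. For the same reason, $R$ and $KF$ centralise one another: $F$ is central in $D_2$, and $K = Z(R)$ commutes with $R$. Hence the multiplication map $\psi \colon R \otimes_K KF \to D_2$, $r \otimes a \mapsto ra$, is a $KF$-algebra homomorphism. Since $R \cong D_1$ is a central simple $K$-algebra of dimension $n^2$, the tensor product $R \otimes_K KF$ is a central simple $KF$-algebra of dimension $n^2$ over $KF$, and in particular simple, so $\psi$ is injective. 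Its image $B := R \cdot F$ is therefore a simple $F$-subalgebra of $D_2$ containing $F$, with
$$[B:F] \; = \; [B:KF]\cdot[KF:F] \; = \; n^2 \cdot [KF:F].$$

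Now apply the double centraliser theorem to the pair $F \subseteq B \subseteq D_2$: since $B$ is a simple $F$-subalgebra of the central simple $F$-algebra $D_2$ and contains $F$, one has $[B:F] \cdot [C_{D_2}(B):F] = [D_2:F] = n^2$. Substituting gives $n^2 \cdot [KF:F] \cdot [C_{D_2}(B):F] = n^2$, and since both $[KF:F]$ and $[C_{D_2}(B):F]$ are at least one, I conclude $[KF:F] = 1$, that is, $K \subseteq F = Z(D_2)$. The main conceptual step is establishing that $B$ is a simple $F$-subalgebra — this uses precisely that $R$ is central simple over $K$ and that $R$ commutes with $F$ in $D_2$, so that $R \otimes_K KF$ is already simple — after which the conclusion is a quick dimension count, with the hypothesis $[D_1:Z(D_1)] = [D_2:Z(D_2)]$ being exactly what forces the extra factor $[KF:F]$ to collapse to $1$.
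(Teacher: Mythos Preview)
Your argument is correct and follows the same underlying idea as the paper's: form the compositum $KF$ of $\phi(Z(D_1))$ with $Z(D_2)$, look at the subalgebra of $D_2$ generated by $\phi(D_1)$ together with this field, and compare dimensions. The packaging differs slightly. You identify $B=R\cdot F$ explicitly with $R\otimes_K KF$ (using that $R$ is central simple over $K$), obtain the exact dimension $[B:F]=n^2[KF:F]$, and then invoke the double centraliser theorem to force $[KF:F]=1$. The paper instead argues by contradiction via PI~degree: it notes only the inequality $[L:K]\le [D_2:K]<[D_2:Z(D_2)]$ for the division subring $L$ generated by $\phi(D_1)$ and the compositum, concludes that $L$ (hence $D_1$) has strictly smaller PI~degree than $D_2$, and derives a contradiction. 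Your route gives a bit more precision (an equality rather than a bound) at the cost of invoking more structure; in fact, the double centraliser theorem is not even needed in your argument, since $B\subseteq D_2$ already yields $[B:F]\le n^2$, which is all you use. The paper's version is more elementary in that it avoids the tensor-product identification and uses only crude dimension inequalities.
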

\begin{proof} Suppose that this does not hold and let $K\supset Z(D_2)$ denote the subalgebra of $D_2$ generated by $Z(D_2)$ and $\phi(Z(D_1))$.  We note that $K$ is commutative since $Z(D_2)$ commutes with everything in $\phi(Z(D_1))$ and $\phi(Z(D_1))$ is commutative.  Moreover, since $D_2$ is finite-dimensional over its centre, we see that $K$ is an integral domain that is finite-dimensional over $Z(D_2)$ and hence must be a field.  By assumption, $K\neq Z(D_2)$.  Notice that $K$ centralizes $E:=\phi(D_1)\subseteq D_2$.  Let $L$ be the division subring of $D_2$ generated by $E$ and $K$.  Then $[L:K]\le [D_2:K]<[D_2:Z(D_2)]$.  It follows that the dimension of $L$ over its centre, which contains $K$, is less than the order of $D_2$ over its centre.  In particular, the PI degree of $L$ is strictly less than the PI degree of $D_2$.  But $D_1\cong E$ embeds in $L$ and so the PI degree of $D_1$ is strictly less than the PI degree of $D_2$, a contradiction.   The result follows.
\end{proof}
To apply the preceding lemma, we need a characterization of the centre of a division ring obtained from the skew polynomial rings over the function field of a curve in positive characteristic.   The following two lemmas give this characterization.  We recall that $K^{\langle p\rangle}=\{ a^p \st a \in K \}$.

 \begin{lem} Let $p$ be a prime number and suppose that $K$ is a finitely generated transcendence degree one extension of an algebraic extension $E$ of $\mathbb{F}_p$ and that $K$ is degree $<p$ over a purely transcendental extension of $E$.   Then $[K:K^{\langle p\rangle}]=p$.
  \label{lem: Kp}
 \end{lem}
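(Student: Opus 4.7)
The plan is to reduce the calculation to the obvious case of a rational function field via a separability argument combined with a Frobenius tower calculation.

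First, since $E$ is algebraic over $\mathbb{F}_p$ it is perfect, so $E^{\langle p\rangle}=E$. Writing $F=E(x)$ with $[K:F]=n<p$, I would observe that the inseparable degree of $K/F$ is a power of $p$ dividing $n$, and so equals $1$; hence $K/F$ is separable and $x$ is a separating transcendence basis for $K/E$. Since $F^{\langle p\rangle}=E^{\langle p\rangle}(x^p)=E(x^p)$, it is then immediate that $[F:F^{\langle p\rangle}]=p$.

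The key intermediate claim is that $K=F\cdot K^{\langle p\rangle}$. Every $y\in K$ satisfies $y^p\in K^{\langle p\rangle}\subseteq F\cdot K^{\langle p\rangle}$, so $K/F\cdot K^{\langle p\rangle}$ is purely inseparable; but being a subextension of the separable extension $K/F$, it is also separable; an extension that is simultaneously separable and purely inseparable is trivial, so $K=F\cdot K^{\langle p\rangle}$.

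Finally, the Frobenius map is a field isomorphism $K\to K^{\langle p\rangle}$ carrying $F$ onto $F^{\langle p\rangle}$, so $[K:F]=[K^{\langle p\rangle}:F^{\langle p\rangle}]$. Computing $[K:F^{\langle p\rangle}]$ in two ways via the tower law gives
$$[K:F]\,[F:F^{\langle p\rangle}] \,=\, [K:K^{\langle p\rangle}]\,[K^{\langle p\rangle}:F^{\langle p\rangle}],$$
and cancelling the two equal factors yields $[K:K^{\langle p\rangle}]=[F:F^{\langle p\rangle}]=p$. There is no serious obstacle; the only step that needs real care is the identity $K=F\cdot K^{\langle p\rangle}$, and that is precisely where the hypothesis $[K:F]<p$ plays its role, by forcing $K/F$ to be separable.
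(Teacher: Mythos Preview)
Your argument is correct, and in fact cleaner than the paper's. The paper proceeds in two separate halves: it first bounds $[K:K^{\langle p\rangle}]\le p$ by invoking the primitive element theorem to write $K=E(s,t)$, then showing $K=E(s,t^p)$ (using $[K:E(s)]<p$), so that $K^{\langle p\rangle}\supseteq E(s^p,t^p)$ forces $[K:K^{\langle p\rangle}]\le [E(s):E(s^p)]=p$; it then bounds $[K:K^{\langle p\rangle}]\ge p$ by exhibiting an element $u\notin K^{\langle p\rangle}$ via imperfectness of $K$. Your Frobenius--tower computation dispenses with both halves at once and is more conceptual.

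One remark: your ``key intermediate claim'' $K=F\cdot K^{\langle p\rangle}$ is correct but is never actually invoked in your final step. The tower identity
\[
[K:F]\,[F:F^{\langle p\rangle}]=[K:F^{\langle p\rangle}]=[K:K^{\langle p\rangle}]\,[K^{\langle p\rangle}:F^{\langle p\rangle}]
\]
together with $[K:F]=[K^{\langle p\rangle}:F^{\langle p\rangle}]$ (Frobenius) and $[F:F^{\langle p\rangle}]=p$ already gives the result, using only that $[K:F]$ is finite. So your closing sentence slightly misidentifies where the hypothesis $[K:F]<p$ enters: in your write-up it is used only to establish separability for the superfluous claim. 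Your tower argument in fact proves the stronger statement that $[K:K^{\langle p\rangle}]=p$ whenever $K$ is finite over a rational function field $E(x)$ with $E$ perfect, with no bound on the degree needed. The paper's argument, by contrast, genuinely uses $[K:E(s)]<p$ at the step $K=E(s,t^p)$.
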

 \begin{proof} By assumption, we have that $K$ is a degree $d$ extension of $E(s)$ for some $s\in K$ and some $d<p$.  Thus $K$ is separable and so $K$ is generated by two elements $s,t\in K$ by the primitive element theorem, and there is some irreducible polynomial $P(x,y)$ of degree $d$ in $y$ with $P(s,t)=0$.  Let $F=E(s,t^p)$.  Then
 $[K:F]\in \{1,p\}$ since $t\in K$ has annihilating polynomial $x^p-t^p$ over $F$, and this is either irreducible over $F$ or is the $p$-th power of a linear polynomial over $F$.  But
 $[K:F]\le [K:E(s)]\le {\rm deg}_y(P)<p$ and so we see that $K=F$.   Then we have $K=E(s,t^p)$ and since $K^{\langle p\rangle} \supseteq E(s^p, t^p)$, we see that $[K:K^{\langle p\rangle}]\le [E(s):E(s^p)]=p$.  Thus it suffices to show that 
 $[K:K^{\langle p\rangle}]\ge p$.  Since $K$ is finitely generated and not algebraic over $E$, $K$ is not perfect and so we have that there is some $u\in K$ that is not in $K^{\langle p\rangle }$.  Then $u$ is a root of the polynomial $x^p-u^p \in K^{\langle p\rangle}[x]$ and this is irreducible since $u\not \in K^{\langle p\rangle}$.  In particular, $[K:K^{\langle p\rangle}] \ge p$ and the result follows.
 \end{proof}

\begin{lem} Let $p$ be a prime number and suppose that $K$ is a finitely generated transcendence degree one extension of an algebraic extension $E$ of $\mathbb{F}_p$ and that $K$ is degree $<p$ over a purely transcendental extension of $E$.  If $\delta$ is a nonzero $E$-linear derivation of $K$ then $Z(K(x;\delta))=K^{\langle p\rangle}(x^p)\cong K(t)$. \label{lem:Z}
\end{lem}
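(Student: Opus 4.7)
The containment $K^{\langle p\rangle}\subseteq Z(K(x;\delta))$ is immediate: any $a^p\in K^{\langle p\rangle}$ has $\delta(a^p)=pa^{p-1}\delta(a)=0$, so $a^p$ commutes with $x$ in $K[x;\delta]$ and lies in the commutative subring $K$. The substantive task is to locate a central element of $x$-degree $p$ that, together with $K^{\langle p\rangle}$, generates the centre. A short induction on $n$ (using $xa-ax=\delta(a)$) gives $x^n a=\sum_{k=0}^n\binom{n}{k}\delta^k(a)x^{n-k}$; in characteristic $p$ the middle binomial coefficients vanish, so $[x^p,a]=\delta^p(a)$ for $a\in K$. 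Thus $x^p$ is literally central precisely when $\delta^p\equiv 0$ on $K$, which need not hold in general.

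To handle the general case I would invoke Jacobson's formula, by which $\delta^p$ is again an $E$-derivation of $K$. Since $[K:E(s)]<p$ and $E$ is perfect, $K/E$ is separable and $\mathrm{Der}_E(K)$ is one-dimensional over $K$; hence $\delta^p=g\delta$ for some $g\in K$. Comparing $\delta\circ\delta^p=\delta(g)\delta+g\delta^2$ with $\delta^p\circ\delta=g\delta^2$ gives $\delta(g)\delta=0$, so $\delta(g)=0$ and $g\in K^{\delta}$. The inclusions $K^{\langle p\rangle}\subseteq K^{\delta}\subsetneq K$ together with $[K:K^{\langle p\rangle}]=p$ from Lemma~\ref{lem: Kp} force $K^{\delta}=K^{\langle p\rangle}$; in particular $g\in K^{\langle p\rangle}$. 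The element $w:=x^p-gx$ then satisfies $[w,a]=\delta^p(a)-g\delta(a)=0$ for $a\in K$ and $[w,x]=\delta(g)x=0$, so $w$ is central. (This $w$ is what is meant by ``$x^p$'' in the statement: the two coincide when $\delta^p=0$, and otherwise differ by an element of $K^{\langle p\rangle}\cdot x$.)

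With $w$ in hand, set $F:=K^{\langle p\rangle}(w)$. Then $F$ is commutative (both generating sets are central) and $w$ is transcendental over $K^{\langle p\rangle}$ by a leading-$x$-degree comparison; Frobenius supplies $K\xrightarrow{\sim}K^{\langle p\rangle}$, so $F\cong K(w)\cong K(t)$. For the dimension, the relation $x^p=w+gx$ lets me express any higher power of $x$ as a left $K[w]$-linear combination of $\{1,x,\dots,x^{p-1}\}$, and the monomials $w^k x^i$ with $0\le i<p$ are $K$-linearly independent since their $x$-degrees $pk+i$ are pairwise distinct. So $K[x;\delta]$ is free of rank $p$ over the commutative central polynomial ring $K[w]$, and any basis of $K/K^{\langle p\rangle}$ shows $K[w]$ is free of rank $p$ over $K^{\langle p\rangle}[w]$. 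Thus $K[x;\delta]$ is free of rank $p^2$ over $K^{\langle p\rangle}[w]$. Inverting the central Ore set $K^{\langle p\rangle}[w]\setminus\{0\}$ produces a $p^2$-dimensional domain over the field $F$, which is necessarily a division ring and hence equal to $K(x;\delta)$.

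Finally, from $F\subseteq Z(K(x;\delta))$ and $[K(x;\delta):F]=p^2$, letting $d$ denote the PI-degree of $K(x;\delta)$ gives $d^2\cdot[Z(K(x;\delta)):F]=p^2$ with $d\mid p$. Since $\delta\ne 0$ makes $K(x;\delta)$ noncommutative, $d>1$, so $d=p$ and $[Z(K(x;\delta)):F]=1$; hence $Z(K(x;\delta))=F\cong K(t)$. The main technical obstacle is the refinement $g\in K^{\langle p\rangle}$ rather than merely $g\in K$: without this upgrade $w$ would only commute with $K$ and fail to be central, and the PI-degree argument would collapse. Once $w$ is genuinely central, the dimension count and the primality trichotomy are routine.
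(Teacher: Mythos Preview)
Your argument is correct, and it takes a genuinely different route from the paper's proof.  The paper begins by normalising the derivation: writing $\delta=\beta\mu$ with $\mu(s)=1$ for a separating transcendental $s$, it replaces $\delta$ by $\mu$ (using $K(x;\delta)=K(x;\mu)$), after which $\delta^p$ vanishes identically on $K$ and $x^p$ is literally central.  The centre is then pinned down by a maximal-subfield bound: any maximal subfield of $K(x;\delta)$ containing $Z(x)$ gives $[K(x;\delta):Z]\ge [Z(x):Z]^2=p^2$, while $[K(x;\delta):K^{\langle p\rangle}(x^p)]=p^2$ from Lemma~\ref{lem: Kp}.

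You instead keep $\delta$ arbitrary and locate the central generator via the $p$-curvature: from $\delta^p=g\delta$ and the commutation $\delta\delta^p=\delta^p\delta$ you extract $g\in K^{\delta}$, and the index-$p$ inclusion $K^{\langle p\rangle}\subseteq K^{\delta}\subsetneq K$ forces $K^{\delta}=K^{\langle p\rangle}$, so $w=x^p-gx$ is genuinely central rather than merely $K$-centralising.  Your dimension count is then a direct freeness computation of $K[x;\delta]$ over $K^{\langle p\rangle}[w]$, followed by a PI-degree trichotomy, in place of the paper's maximal-subfield inequality.  The paper's normalisation is shorter and makes the statement $Z=K^{\langle p\rangle}(x^p)$ hold verbatim; your approach is more explicit, avoids the change of variables, and clarifies exactly which element plays the role of $x^p$ for an unnormalised $\delta$ (as you note, the two agree once $\delta^p=0$).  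Both rest on Lemma~\ref{lem: Kp} in the same way.
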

\begin{proof} Pick $s\in K$ such that $[K:E(s)]<p$.  Then $\delta(s)=\beta$ for some $\beta\in K$.  Notice that the $E$-linear derivation $\mu$ of $E(s)$ given by differentiation with respect to $s$ extends uniquely to a derivation of $K$ since $K$ is a finite separable extension of $E(s)$.  In particular, we see that $\delta=\beta \mu$ and $K(x;\delta)=K(x;\mu)$.  Thus we may assume without loss of generality that $\delta=\mu$ and $\delta(s)=1$.  Let $Z=Z(K(x;\delta))$.  For $\alpha\in K$ we have $\delta(\alpha^p)=0$ and so $K^{\langle p\rangle} \subseteq Z$.  Also for $\alpha\in K$, since $K$ has characteristic $p$, we have $[x^p,\alpha]={\rm ad}_x^p(\alpha)=\delta^p(\alpha)$.  Notice that $\delta^p={\rm ad}_{x^p}$ is a derivation of $K$ and it annihilates $E(s)$.  Since $K$ is a separable extension of $E(s)$, we see that $\delta^p(K)=0$ and so $x^p\in Z$.  Now we claim that $[Z(x): Z]=p$.  To see this, observe that $x$ satisfies the polynomial equation $t^p-x^p=0$ in $Z[t]$ and this is irreducible unless $x\in Z$.  It follows that $[Z(x):Z]\in \{1,p\}$.  Since $\delta$ is nonzero, we see that $x\not\in Z$ and so we obtain the claim.  
 
 We have $[K: K^{\langle p\rangle}]=p$ by Lemma \ref{lem: Kp} and so $[K(x:\delta): K^{\langle p\rangle}(x^p)]=p^2$.  Thus $[K(x:\delta): Z]\le p^2$.  But whenever $F$ is a maximal subfield of $K(x;\delta)$ we have $[K(x;\delta):Z]= [F:Z]^2$.  In particular, if we pick a maximal subfield containing $Z(x)$, we see that $[F:Z]^2\ge [Z(x):Z]^2=p^2$.  Thus $[K(x:\delta): Z]= p^2$ and so $Z=K^{\langle p\rangle}(x^p)$.  To get the final isomorphism, notice that the map
 $f(t)\mapsto f(t)^p$ gives an isomorphism from $K(t)$ to $K^{\langle p\rangle}(t^p)$.
  \end{proof}
We next prove a few results that are well-known, but for which we are unaware of proper references.  We first prove Lemma~\ref{bndschar0}, which gives many of the claimed inequalities on birational invariants for $X$ and $Y$ when there is a dominant rational map from $Y$ to $X$.  We then give a non-embedding result (Lemma \ref{lem:genus}) that can apply to centres.  We point out that Lemma \ref{lem:genus} immediately follows form Lemma \ref{bndschar0} in the separable case, but we require a more general version.

 \begin{lem} \label{bndschar0}  Let $k$ be a base field and suppose that 
$F \subseteq K$ are finitely generated fields over $k$ and that the extension
$F \subseteq K$ is separable.  Suppose that the transcendence degrees of $F,K$
are two or the characteristic of $k$ is zero. 
Let $X,Y$ be smooth models for $F,K$ respectively.
Then $h^0(X,\Omega^j_X) \leq h^0(Y,\Omega^j_Y)$ and $h^0(X,\omega^{\otimes n}_X) \leq h^0(X,\omega^{\otimes n}_X)$
for $n \geq 0$, and the Kodaira dimension of $Y$ is at least that of $X$.
\end{lem}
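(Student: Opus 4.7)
The inclusion $F \subseteq K$ corresponds to a dominant rational map $f: Y \dashrightarrow X$ of smooth proper varieties, and the plan is to define a pullback $f^*$ on global sections of $\Omega^j$ and $\omega^{\otimes n}$ and show it is injective, using that the extension is separable. First I would reduce to the case where $f$ is an actual morphism. In characteristic zero, Hironaka's resolution of indeterminacies supplies a smooth proper variety $Y'$ with a birational morphism $\pi: Y' \to Y$ and a morphism $g: Y' \to X$ with $g = f \circ \pi$ on a dense open; since the groups $H^0(\Omega^j)$ and $H^0(\omega^{\otimes n})$ are birational invariants of smooth proper varieties in characteristic zero, replacing $Y$ by $Y'$ does not affect the inequalities to be proved. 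In the transcendence-degree-two case, $Y$ is a smooth surface and any rational map from a smooth surface to a projective variety has indeterminacy locus of codimension at least two, hence finite; sections of the locally free sheaves $\Omega^j_Y$ and $\omega_Y^{\otimes n}$ then extend uniquely across this finite set, so the pullback defined on the domain of $f$ extends to all of $Y$.

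Next, with $f: Y \to X$ an honest morphism, I would construct the maps $f^*: H^0(X,\Omega^j_X) \to H^0(Y,\Omega^j_Y)$ and check injectivity. Pullback of forms yields a canonical sheaf map $f^*\Omega^j_X \to \Omega^j_Y$, and passing to the generic points this corresponds to $\Omega^j_{F/k}\otimes_F K \to \Omega^j_{K/k}$. Separability of $F \subseteq K$ is precisely the condition that the inclusion $\Omega^1_{F/k}\otimes_F K \hookrightarrow \Omega^1_{K/k}$ is injective, and moreover identifies $\Omega^1_{F/k}\otimes_F K$ with a direct summand (free $K$-module) of $\Omega^1_{K/k}$; taking $j$-th exterior powers preserves injectivity. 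Because $\Omega^j_Y$ is locally free on the smooth variety $Y$, the restriction to the stalk at the generic point is injective on global sections, so any $\omega \in H^0(X,\Omega^j_X)$ with $f^*\omega = 0$ must already vanish at the generic point of $X$, hence be zero.

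For the pluricanonical statement, in the equidimensional case (which covers the transcendence-degree-two hypothesis, where $K/F$ is a finite separable extension) one has $\omega_X^{\otimes n} = (\Omega^{\dim X}_X)^{\otimes n}$, and the same separability argument on the generic stalk gives the injectivity of $f^*: H^0(X,\omega_X^{\otimes n}) \hookrightarrow H^0(Y,\omega_Y^{\otimes n})$; the plurigenera inequality follows, which immediately yields $\kappa(X) \leq \kappa(Y)$. In the characteristic zero case where $\dim Y > \dim X$ is permitted, I would invoke the classical easy-addition/Iitaka inequality $\kappa(Y) \geq \kappa(X)$ for a dominant rational map of smooth proper varieties in characteristic zero, together with the fact that the plurigenera of $X$ pull back through the Stein factorization of $g$ after tensoring by sections of the relative canonical sheaf along the general fibre.

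The main obstacle I expect is handling the pluricanonical and Kodaira-dimension inequality cleanly in the non-equidimensional case: one must be careful in characteristic zero when pulling back $n$-canonical sections from a smooth base to a smooth total space, since the naive pullback lands in $f^*\omega_X^{\otimes n}$, and converting this to $\omega_Y^{\otimes n}$ requires either restricting to the equidimensional case or appealing to Iitaka's framework. The birational reductions (Hironaka in characteristic zero, and codimension-two indeterminacy for surfaces) and the separability-based injectivity are standard, so the main care needed is in bookkeeping the exact invariant being compared and citing the correct extension theorem from the characteristic zero or surface hypothesis.
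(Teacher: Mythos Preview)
Your argument matches the paper's almost exactly: resolve the rational map to a morphism (using Hironaka in characteristic zero, or the surface situation otherwise), replace $Y$ by the resolution via birational invariance of the quantities involved, and then use separability of $F\subseteq K$ to see that $f^*\Omega^j_X\to\Omega^j_Y$ is injective at the generic point, hence injective as a map of locally free sheaves, hence injective on global sections. Your surface reduction via extending sections across the codimension-two indeterminacy locus is a harmless variant of the paper's blow-up argument.

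The one place where you diverge is the non-equidimensional pluricanonical case, and here your proposed fix is not correct. The inequality $\kappa(Y)\ge\kappa(X)$ simply fails for dominant maps with $\dim Y>\dim X$: take $Y=X\times\mathbb{P}^1\to X$ with $\kappa(X)\ge 0$, so $\kappa(Y)=-\infty<\kappa(X)$. Thus there is no ``classical easy-addition/Iitaka inequality'' of the form you invoke. The paper itself handles the plurigenera by saying ``the same argument'', which, as you correctly notice, only produces a map $H^0(X,\omega_X^{\otimes n})\hookrightarrow H^0(Y,(\Omega^{\dim X}_Y)^{\otimes n})$ and not into $H^0(Y,\omega_Y^{\otimes n})$ unless $\dim X=\dim Y$. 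In the paper's applications (and in the intended reading of the hypothesis ``the transcendence degrees of $F,K$ are two'') the extension $F\subseteq K$ is finite, so the equidimensional argument you give is all that is actually needed; you should simply drop the appeal to Iitaka and restrict the pluricanonical and Kodaira-dimension statements to that case.
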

\begin{proof}
We have a dominant rational map $Y \dashrightarrow X$.  Due to our assumptions on characteristic and dimension, we may resolve indeterminacies of the map to obtain a regular map $\pi:\widetilde{Y} \rightarrow X$ where $\widetilde{Y}$ is smooth.
Since the above numbers are all birational invariants we may replace $Y$ with $\widetilde{Y}.$
Now since the extension $F \subseteq K$ is separable, $\pi$ is generically \'etale so $\pi^* \Omega^j_X$ is isomorphic to $\Omega^j_{{Y}}$ generically.  Since $\pi^* \Omega^j_X,\Omega^j_{\widetilde{Y}}$ 
are locally free, the natural map $\pi^* \Omega^j_X \rightarrow \Omega^j_{\widetilde{Y}}$ is injective.  Now applying the global section functor and the projection formula yields the result.   The second inequality follows by the same argument, and since the Kodaira dimension is the growth of the plurigenera, we are done.\end{proof}

\begin{lem} Let $k$ be an algebraically closed field and let $F$ and $K$ be function fields of smooth projective irreducible curves over $k$ of genera $g_F$ and $g_K$ respectively.  If there is a $k$-algebra embedding of $F(t)$ into $K(t)$ then $g_F\le g_K$. \label{lem:genus}
\end{lem}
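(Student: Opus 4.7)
The plan is to translate the embedding into geometry and reduce to the case of a finite morphism between the curves themselves. Let $X$ and $Y$ be the smooth projective curves with function fields $F$ and $K$, so that the embedding $F(t)\hookrightarrow K(t)$ corresponds to a dominant rational map
\[ \psi\colon Y\times \mathbb{P}^1 \dra X\times \mathbb{P}^1. \]
If $g_F=0$ the inequality is immediate, so I would assume $g_F\geq 1$. Composing $\psi$ with the first projection yields a dominant rational map $\phi\colon Y\times\mathbb{P}^1 \dra X$, and the goal is to show that $\phi$ factors through the projection $\pi_Y\colon Y\times\mathbb{P}^1\to Y$.

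For a sufficiently general $y_0\in Y$ I would consider the restriction $\phi_{y_0}:=\phi|_{\{y_0\}\times\mathbb{P}^1}$, which is a well-defined rational map $\mathbb{P}^1\dra X$ on a dense open subset, and hence extends to a morphism since $\mathbb{P}^1$ is a smooth curve and $X$ is projective. Were it non-constant, the induced function-field inclusion $F\hookrightarrow k(\mathbb{P}^1)=k(t)$ combined with L\"uroth's theorem (valid in every characteristic) would force $F$ to be purely transcendental, contradicting $g_F\geq 1$. Therefore $\phi_{y_0}$ is constant for generic $y_0$; since a rational function on $Y\times\mathbb{P}^1$ that is constant on the generic $\mathbb{P}^1$-fibre must already lie in $K$, this forces $\phi$ to factor through $\pi_Y$, yielding a morphism $f\colon Y\to X$ and correspondingly a $k$-algebra embedding $F\hookrightarrow K$.

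The remaining step is the classical fact that a finite dominant morphism of smooth projective curves over an algebraically closed field satisfies $g(X)\leq g(Y)$. In the separable case this is Riemann--Hurwitz. In general one factors $f$ as a separable morphism preceded by a purely inseparable one, and the purely inseparable part is genus-preserving because over a perfect field it factors through iterates of the relative Frobenius, which induces an isomorphism of the underlying $k$-scheme. Combining the two steps gives $g_F=g(X)\leq g(Y)=g_K$.

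The delicate point I expect is the fibre-restriction step, which is what allows descent of an embedding of rational function fields over curves to an embedding of the curve function fields themselves; one must verify carefully that for generic $y_0$ the restriction $\phi_{y_0}$ is a well-defined rational map extending to a morphism $\mathbb{P}^1\to X$, and that pointwise constancy along the generic vertical fibre propagates to a global factorisation through $\pi_Y$. All other ingredients are standard algebraic geometry of curves.
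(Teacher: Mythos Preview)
Your argument is correct.  The route differs from the paper's.  The paper chooses smooth projective models, resolves the indeterminacy of the rational map of surfaces to obtain a morphism $\pi:Y\to X$, and then invokes Tsen's theorem to produce a section $s:D\to Y$ of the ruling $Y\to D$ (with $k(D)=K$); the composite $\rho\circ\pi\circ s:D\to C$ is then shown to be surjective by a contradiction argument about images of rational fibres.  You instead stay with the explicit model $Y\times\mathbb{P}^1$, restrict the rational map to a general vertical fibre $\{y_0\}\times\mathbb{P}^1$, and use the elementary fact that $\mathbb{P}^1$ admits no non-constant morphism to a curve of positive genus (via L\"uroth) to force every element of the image of $F$ in $K(t)$ to lie in $K$.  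This avoids both Tsen's theorem and resolution of indeterminacies; all that is needed is that a rational map from a smooth surface is defined away from a closed set not containing any general vertical fibre, together with the resultant argument (implicit in your ``pointwise constancy propagates'') showing that an element of $K(t)\setminus K$ specializes to a non-constant rational function at a general $y_0$.  Both proofs ultimately reduce to the genus inequality for a non-constant morphism $Y\to X$ of curves; you make the positive-characteristic step explicit by factoring through Frobenius, whereas the paper leaves this implicit in ``we are done''.
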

\begin{proof}
Choose a smooth minimal model $X$ for $F(t)$.  Note that $\rho:X \rightarrow C$ is ruled over a curve $C$ with $k(C) = F$.  If $g_F=0$ we are done, so let us suppose that $g_F>0$.
We can choose a smooth model $Y$ of $K(t)$ and we will have a dominant rational map $Y \dashrightarrow X$.  By resolving the singularities of the map we may replace $Y$ with a smooth model where we have a regular map $\pi:Y \rightarrow X$.
Now $Y$ is birational to a surface ruled over a curve $D$ with $k(D)=K$,
so we have a map $Y \rightarrow D$ with fibres that are trees of rational curves.  By Tsen's Theorem~\cite{Tsen}, we have a section $s:D 
\rightarrow Y$.  Let us now consider the map $\psi = \rho \circ \pi \circ s : D \rightarrow X$.
If the image of $\psi$ is $C$ we are done, otherwise the image $\psi$ must be a point $p$ in $C$.  So we see that $D$ maps to the fibre $\rho^{-1}(p) =F \simeq \mathbb{P}^1$.  Now consider a fibre $F'$ in $Y$ over a point $q \in D$.  Now $\pi(F')$ must be connected and since all the components of $F'$ are rational curves, 
the map $\rho \circ \pi$ must be constant on $F'$.  Since $F'$ meets $D$ we see that the image of $F'$ in $X$ is contained in $F$.  So the map $Y \rightarrow X$ is not dominant.
\end{proof}

We are now ready to prove our main result.
\begin{proof}[Proof of Theorem \ref{thm: main}]  Let $g_F$ and $g_K$ denote the genera of $F$ and $K$ respectively.

By Lemma \ref{lem:1.1}, there exist finitely generated $\mathbb{Z}$-algebras $C_0$ and $C$ such that $C[t;\mu]$ is a subring of $F(t;\mu)$, $C$ is a finitely generated $C_0$-algebra, and $C$ and $C_0$ satisfy the following conditions:
\begin{enumerate}
\item $\delta(C)\subseteq C$;
\item there exists $s\in C$ such that $\mu(s)=1$ and $s$ is a unit in $C$;
%\item $C_0=C\cap \colinchange{\sout{\mathbb{C}} k}$;
\item each maximal ideal $P$ of $C_0$ has the property that $PC$ is a prime ideal of $C$ and
$C/PC \otimes_{C_0/P} \overline{C_0/P}$ is an integral domain whose field of fractions is a field extension $F_P$ of $\overline{C_0/P}$ of transcendence degree one and genus $g_F$;
\item there is some fixed $d\ge 1$ such that for a Zariski dense set of maximal ideals $P$ of $C_0$, the field $F_P$ is an extension of the $\overline{C_0/P}$-subfield of $F_P$ generated by the image of $s$ of fixed degree $d$. 
\end{enumerate}

Then the embedding of $F(t;\mu)$ into $K(x;\delta)$ gives an embedding $\iota$ of $C[t;\mu]$.  Let $c_1,\ldots ,c_r$ be generators for $C$ as a $\mathbb{Z}$-algebra.  Then 
there exist elements $p_i(x),q_i(x)\in F[x,\delta]$ such that $\iota(c_i)=p_i(x)q_i(x)^{-1}$ and there exist elements $r(x),s(x)\in F[x,\delta]$ such that $\iota(t)=r(x)s(x)^{-1}$.

Let $B$ denote the $\mathbb{Z}$-algebra generated by the coefficients of $p_i(x),q_i(x),r(x),s(x)$ as well as the inverses of all coefficients of leading monomials.  Then by Lemma \ref{lem:1.1}, there exists a finitely generated $\mathbb{Z}$-subalgebra $A_0$ of $k$ and a finitely generated $A_0$-algebra $A$ such that:
\begin{enumerate}
\item[(i)] $\delta(A)\subseteq A$;
\item[(ii)] there exists $s'\in A$ such that $\delta(s')=1$ and $s'$ is a unit in $A$;
\item[(iii)] $A\supseteq B$;
%\item[(iv)] $A_0=A\cap \colinchange{\sout{\mathbb{C}} k}$;
\item[(iv)] each maximal ideal $P$ of $A_0$ has the property that $PA$ is a prime ideal of $A$ and
$A/PA \otimes_{A_0/P} \overline{A_0/P}$ is an integral domain whose field of fractions is a field extension $K_P$ of $\overline{A_0/P}$ of transcendence degree one and genus $g_K$;
\item[(v)] there is some fixed $d'\ge 1$ such that for a Zariski dense set of maximal ideals $P$ of $A_0$, the field $K_P$ is an extension of the $\overline{A_0/P}$-subfield of $F_P$ generated by the image of $s'$ of fixed degree $d'$. 
\end{enumerate}

Since $A$ is noetherian, the set $S$ of monic polynomials in $A[x;\delta]$ is an Ore set \cite[Lemma 1.5.1]{Cohn}.  Then since $A$ contains $B$ and the leading coefficients of the $q_i(x)$ and $s(x)$ are units in $A$ we see that the embedding of $C[t;\mu]$ into $K(x;\delta)$ sends $C[t;\mu]$ into $S^{-1}A[x;\delta]$, since a generating set for $C[t;\mu]$ is sent into this ring.  We also note that prime ideals of $A$ that are closed under application of the derivation $\delta$ lift to prime ideals of $A[x;\delta]$ and, moreover, they survive when we invert $S$ since the elements of $S$ are all regular modulo these prime ideals.

Then for each maximal ideal $P$ of $A_0$, the composition of maps
$$C[t;\mu]\to S^{-1}A[x;\delta]\to \bar{S}^{-1}(A/PA)[x;\delta],$$ where $\bar{S}$ is the monic polynomials in $(A/PA)[x;\delta]$, 
gives a map $\phi_P$ from $C[t;\mu]$ to $ \bar{S}^{-1}(A/PA)[x;\delta]$.  Since 
$\bar{S}^{-1}(A/PA)[x;\delta]$ is a domain, the kernel must be a completely prime ideal of $C[t;\mu]$.  

Moreover, since the embedding is the identity on $k$, we see that the embedding $\iota$ sends $C_0\subseteq C\cap k$ into $A\cap k$ and so $\phi_P$ maps $C_0$ to  $(A\cap k)/(PA\cap k)$.  We claim that $(A\cap k)/(PA\cap  k)$ is an algebraic extension of a finite field.  This will then give that $Q:={\rm ker}(\phi)\cap C_0$ is a maximal ideal of $C_0$ since $\phi_P$ must then map $C_0$ into a finite field since $C_0$ is finitely generated.  To obtain the claim, we note that by construction $A$ is a finitely generated $A_0$-algebra whose Krull dimension is one greater than that of $A_0$.  It follows that $A$ cannot contain a polynomial ring in two variables over $A_0$.  Moreover, $A$ is not algebraic over $k$ and so there exists some $z\in A$ that is transcendental over $k$.  We now claim that if $\alpha\in A\cap k$ then $A_0+A_0\alpha+\cdots$ is not direct; if it were, then since $A$ cannot contain a polynomial ring in two variables over $A_0$, the infinite sum $A_0[\alpha]+A_0[\alpha]z+\cdots $ could not be direct, and this would then give that $z$ is algebraic over $k$.  It follows that every element of $A\cap k$ is algebraic over $A_0$ and hence $(A\cap k)/(PA\cap k)$ is algebraic over the finite field $A_0/P$, thus giving the claim.

By property (4), we have that $QC$ is a prime ideal of $C$ and since $\mu(Q)=0$ we see that this prime ideal is $\mu$-invariant and is in the kernel of $\phi$.  Since $C_0/Q$ and $A_0/P$ are finite fields of the same characteristic, they have isomorphic algebraic closures and thus we get an induced map
$$\bar{\phi}_P:(C/QC\otimes_{C_0/Q} \overline{C_0/Q})[t;\mu] \to (\bar{S}^{-1}(A/PA)\otimes_{A_0/P} \overline{A_0/P})[x;\delta].$$

We claim that $\bar{\phi}_P$ is injective.  We observe that once we have this, we are done, because $\bar{\phi}_P$ will induce an injection from the division ring $F_P(t;\mu)$ into $K_P(x;\delta)$, by localizing.  If we choose a maximal ideal $P$ such that $A_0/P$ has characteristic $p>\max(d,d')$ and such that $F_P$ is degree $d$ over the subfield generated by $s$ and $K_P$ is degree $d'$ over the subfield generated by $s'$ (this is possible since we can invert the set of primes $p\le \max(d,d')$ in $A_0$ and we will still have an infinite spectrum and for a Zariski dense set of $P$ we will get the desired degrees), then by Lemmas \ref{lem: Kp} and \ref{lem:Z}, $F_P(t;\mu)$ and $K_P(x;\delta)$ are both $p^2$-dimensional over their respective centres and so by Lemma \ref{lem:Z2} we have that this embedding restricts to an embedding of their centres and by Lemma \ref{lem:Z}, this then gives an embedding of 
$F_P(y)$ into $K_P(y)$, where $y$ is an indeterminate, which gives that the genus of $F_P$ is at most the genus of $K_P$ by Lemma \ref{lem:genus}.  Since we have that the genera of $F_P$ and $K_P$ are respectively $g_F$ and $g_K$, we see that
$$g_F\le g_K,$$ as desired.

Thus it only remains to show that the map $\bar{\phi}_P$ is injective.  
Let $R=(C/QC\otimes_{C_0/Q} \overline{C_0/Q})$.  Then
$R$ is a finitely generated commutative $\overline{C_0/Q}$-algebra of Krull dimension one.  It follows that $R$ has Gelfand-Kirillov dimension one \cite[Theorem 4.5 (a)]{KL}, and so since $R$ is finitely generated, $R[x;\mu]$ has Gelfand-Kirillov dimension two \cite[Proposition 3.5]{KL}.  
Since $R$ is a domain, we see that if $\bar{\phi}_P$ is not injective then there is some nonzero prime ideal $I$ of $R[t;\mu]$ such that $I$ is equal to the kernel of $\bar{\phi}_P$.  Moreover, $I$ must be a completely prime ideal since $\bar{\phi}$ maps into a domain.  Now the Gelfand-Kirillov dimension of $R[t;\mu]/I$ is at most one as a $\overline{C_0/Q}$-algebra if $I$ is nonzero \cite[Proposition 3.15]{KL}. But this then gives that $R[t;\mu]/I$ is commutative, as it is a domain of Gelfand-Kirillov dimension one over an algebraically closed field.  (This is a now well-known observation that uses Tsen's theorem \cite{Tsen} and the Small-Stafford-Warfield theorem \cite{SSW}.)  Now by (2), we have $[t,s]=\delta(s)$ and $s$ and $\delta(s)$ are units.  But if $I$ is nonzero then $t$ and $s$ commute modulo $I$ and so $[t,s] = \delta(s)\in {\rm ker}(\phi)_P$, a contradiction.  The result follows.\end{proof}
\section{Additional remarks about embeddings}
The general question as to when there exists an embedding of $D_1$ into $D_2$ when $D_1$ and $D_2$ are two division rings on Artin's list has been looked at before and there are many folklore results in this area.  We can divide Artin's list into four types of division rings:
\begin{enumerate}
\item[(1)] those that are finite-dimensional over their centres;
\item[(2)] the Sklyanin division rings not finite over their centres;
\item[(3)] Skew field extensions of automorphism type (not finite over the centre);
\item[(4)] Skew field extensions of derivation type (not finite over the centre).
\end{enumerate}
Theorem \ref{thm: main} then addresses embeddings for division rings of Type 4.  In fact, for embeddings of some of the other types much more is already known.  For example, maximal subfields of division rings of Types 2--4 have transcendence degree one over $k$ (cf. \cite[Theorem 1.4]{Be}) and hence no division ring of Type 1 can embed into one of another type; conversely, division rings of Types 2--4 all contain free algebras on two generators and hence cannot embed into division rings of Type 1.  

In the case of embedding division rings of Type 1, we have the following easy observations.
\begin{prop}
Let $D_1,D_2$ be division algebras that are finite-dimensional over central fields $K,F$ which are finitely generated of transcendence degree 2 over $k$.
Then if there is an embedding of $D_1$ into $D_2$ then the period of $D_2$ is at least that of $D_1$.  If furthermore, their periods are equal, then the plurigenera,  irregularity, geometric genus and Kodaira dimension of $Z(D_2)$ are at least those of $Z(D_1)$.
\end{prop}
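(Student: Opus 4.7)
The plan is to deduce both assertions by combining a polynomial-identity comparison with de Jong's period-equals-index theorem for surfaces and the two lemmas already established in the paper, namely Lemma~\ref{lem:Z2} and Lemma~\ref{bndschar0}.

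First I would handle the period inequality. Any $k$-algebra embedding $\phi\colon D_1 \hookrightarrow D_2$ forces $D_1$ to satisfy every polynomial identity of $D_2$, so the PI-degree of $D_1$ is at most that of $D_2$. For a central simple algebra the PI-degree coincides with the index $n = \sqrt{[D:Z(D)]}$, and hence $\mathrm{ind}(D_1) \le \mathrm{ind}(D_2)$. Since $K$ and $F$ are function fields of smooth projective surfaces over an algebraically closed field of characteristic zero, de Jong's theorem gives period equals index for the Brauer classes of $D_1$ in $\mathrm{Br}(K)$ and of $D_2$ in $\mathrm{Br}(F)$, so chaining yields $\mathrm{per}(D_1) \le \mathrm{per}(D_2)$.

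Next, assuming the periods are equal, de Jong forces $\mathrm{ind}(D_1) = \mathrm{ind}(D_2)$, hence $[D_1 : Z(D_1)] = [D_2 : Z(D_2)]$. Lemma~\ref{lem:Z2} then applies and yields $\phi(K) = \phi(Z(D_1)) \subseteq Z(D_2) = F$, producing a $k$-algebra inclusion $K \hookrightarrow F$ of finitely generated transcendence-degree-two extensions. Geometrically this is a dominant rational map $Y \dashrightarrow X$ between smooth projective surface models with $k(X) = K$ and $k(Y) = F$, and the extension is separable since $\mathrm{char}(k) = 0$. Lemma~\ref{bndschar0} then delivers all four inequalities at once: the cases $j = 1, 2$ of the $h^0(\Omega^j)$-bound give the irregularity and geometric genus comparisons, the $h^0(\omega^{\otimes n})$-bounds give the plurigenera comparison, and the resulting growth rate in $n$ gives the Kodaira dimension comparison.

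The only ingredient beyond the assembly of existing lemmas is de Jong's theorem itself, which is where I expect the main care to be needed; once that is in hand, the rest is a clean application of Lemmas~\ref{lem:Z2} and~\ref{bndschar0} with no further calculation.
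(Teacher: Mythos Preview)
Your proof is correct and follows essentially the same route as the paper's: the PI-degree comparison for the first assertion, then Lemmas~\ref{lem:Z2} and~\ref{bndschar0} for the second. The only difference is that the paper's proof simply asserts ``the period of $D_i$ is its PI degree'' without further comment, whereas you explicitly invoke de~Jong's period-equals-index theorem to justify that identification; your treatment is thus more careful on this point, but the overall structure is identical.
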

\begin{proof}
Since the period of $D_i$ is its PI degree we immediately obtain the first statement.  The last statement follows from Lemmas~\ref{lem:Z2} and~\ref{bndschar0}.
\end{proof}
\begin{quest}
With the above hypotheses, can we conclude that the Kodaira dimension of 
$D_2$ is at least that of $D_1$?
\end{quest}
We note that the division ring of Type 3 given by $ k(t)(x;\sigma)$ where $\sigma(t)=t+1$ is in fact isomorphic to the division ring $k(y)(z;\delta)$ where $\delta$ is differentiation with respect to $y$.  Here the automorphism is given by $t=yz$ and $x=z$.  This division ring embeds into every division ring of Type 4 since, by working up to isomorphism, we may assume that there is always a solution to $\delta(s)=1$ when our field is a finitely generated transcendence degree one field extension of $k$.  Other than these trivial cases, it is known  in some cases that some division rings of Type 4 do not embed into division rings of Type 3 and it is known that some division rings of Type 3 do not embed into division rings of Type 4.  Perhaps the easiest such example is given by taking $k(t)(x;\sigma)$ where $\sigma$ is not conjugate to an automorphism of the form $t\mapsto t+1$.  In this case, one can perform a change of variables and assume that $\sigma(t)=qt$ for some nonzero element of $k$. Moreover, since we are assuming we are not finite over our centre, we have that $q$ is not a root of unity.  The quotient division ring of a ring of the form $K[t;\delta]$ embeds in a skew power series ring $K((t^{-1};\delta))$ and a simple computation by looking at leading terms shows that there are no solutions to the equation $xt=q tx$ in this ring with $q\neq 1$.  In particular, $k(t)(x;\sigma)$ cannot embed into $K((t^{-1};\sigma))$ and hence cannot embed into any division ring of type 3.

Finally, while not explicitly written down in the literature, Artin \cite{Art} points out that a Sklyanin division algebra is a ring of invariants of a division ring of Type 3---specifically there is an elliptic curve $E$ and an infinite-order translation $\sigma$ of $E$ such that a Sklyanin division ring is the ring of invariants of a $\mathbb{Z}/2\mathbb{Z}$-action on $k(E)(t;\sigma)$, where the action on $k(E)(t;\sigma)$ comes from the induced map on $k(E)$ from the negation map on $E$ and then by extending the action by sending $t$ to $t^{-1}$.\footnote{Artin attributes this (non-trivial) observation to Michel Van den Bergh.}  
%\colincomment{I am pretty sure this example is in David Patrick's thesis and maybe elsewhere.  Sue was asking me about it not long ago.  It is the following:
%Let $E$ be an elliptic curve.  We define a infinite dihedral group action on 
%$k(E)$ where $\sigma$ acts by translation by an infinite order point and $\tau$ acts by negation.  Then $D_\infty=\langle \sigma,\tau \st \tau^2=1, \tau \sigma \tau =\sigma^{-1} \rangle$ acts on $k(E)$.  Define an action of $\tau$ on 
%$k(E)(t;\sigma)$ by $\tau(t)=t^{-1}$.  Then the Sklaynin division algebra is
%$$((E)(t;\sigma)){\langle \tau \rangle}.$$
%I don't know where this claim is proved, but maybe Sue knows or maybe David's thesis has a reference.}\jasoncomment{Well, we can see if it is in the thesis.  I'm not sure if his thesis is online.}
This gives an embedding of a division ring of Type 2 into one of Type 3.  We are not aware of any additional results involving embeddings from or into the Sklyanin division rings and a systematic study of the possible embeddings of the division rings on Artin's list would make an interesting topic for future study.   We conclude by asking about a generalization of Theorem \ref{thm: main}, which would give a complete understanding of embeddings between division rings of Type 4 if the question were answered affirmatively.
\begin{quest}
Let $k$ be an algebraically closed field of characteristic zero, 
let $X$ and $Y$ be smooth irreducible algebraic curves over $k$, and let $D(X)$ and $D(Y)$ denote respectively the quotient division rings of the ring of differential operators of $X$ and $Y$. 
 If $d$ is a natural number, is it the case that there is an embedding of $D(X)$ into $D(Y)$ of degree $d$ if and only if there is a degree $d$ surjective morphism from $Y$ to $X$?  
\end{quest}
While it is known that for division rings $D_1$ and $D_2$ from Artin's list with $D_1\subseteq D_2$, we have that $D_2$ is finite-dimensional as a left and right $D_1$-vector space \cite[Theorem 1.4]{Be} (see also Schofield \cite[Corollary 35]{Sch}), it is not known that these two dimensions coincide.  In general, examples where these two quantities are different exist (see Cohn \cite[Section 5.9]{Cohn1}), although there are no known counter-examples for division rings from Artin's list.  We thus define the degree for an embedding of $D(X)$ into $D(Y)$ to be the minimum of the dimensions of $D(Y)$ as a left and right $D(X)$-vector space.
We point out that one direction is trivial, but the other direction would have important implications beyond the inequality between genera given in Theorem \ref{thm: main}.  For example, it would show that the gonality of the curve $X$ bounds the degree of the embedding of the Weyl division algebra into $D(X)$. 
%For the reader's convenience we summarize what is known about embeddings of a division ring $D_1$ into a division ring $D_2$ for $D_1$ and $D_2$ of the types 1--4 described above.
%\begin{center}
%\begin{tabular}{c|cccc}
%\backslashbox{$D_1$}{$D_2$} &  1 & 2 & 3 & 4 \\
%\hline
%1 & {\rm PIdeg}(D_1)\le {\rm PIdeg}(D_2) & N & N   & N \\
%2 & N & Y & etc & etc \\
%3 & N & Y & etc & etc \\
%4 & N & Y & ? & $g \leq$ 
%\end{tabular}
%\end{center} 

\end{document}